\documentclass[a4paper,10pt]{article}
\usepackage[english]{babel}

\usepackage{mathrsfs}
\usepackage{amsthm}
\usepackage{amssymb}
\usepackage{amsmath,amsthm,amsfonts}
\usepackage{hhline}
\usepackage{textcomp}
\usepackage{amsmath,amscd}
\usepackage{amsmath}
\usepackage{amsfonts}
\usepackage{amssymb}
\usepackage{graphicx}

\newtheorem{theorem}{Theorem}[section]
\newtheorem{lemma}{Lemma}[section]

\newtheorem{defn}{Definition}[section]
\newtheorem{cor}{Corollary}[section]
\theoremstyle{definition}

\oddsidemargin=3,6mm
\evensidemargin=3,6mm
\textwidth=17cm
\textheight=22cm

\begin{document}
\title{ Some properties of Graph Laplacians of cyclic groups  }
\author{ Dmitriy Goltsov  \footnote {Institute of mathematics of Ukrainian Academy of Sciences. E-mail: adanos@i.ua}}  % you should write your name and adders here
\date{}
\maketitle
\begin{abstract}
In this paper we investigate a spectra of the Laplacian matrix of cyclic groups using the properties 
of their characteristic polynomials. We have proved several assertions about the relationship 
between the spectra of different groups.
\end{abstract}
\textbf{Keywords:} Graph Laplacians, cyclic groups.

\section{Introduction} % you should write the title of each section for example here introduction
Let us consider a graph $G$ with the vertex set $V=\{1,....,n\}$ and the edge set $E$.

\begin{defn} The Laplacian matrix of the Graph G is a matrix
$L(G)=(a_{i,j\in V})$, with
$$
a_{i,j}=\begin{cases}-1&\text{if $ij\in E$}\\d(i)&\text{if $i=j$}\\0&\text{otherwise}\end{cases}
$$
where $d(i)=\vert\lbrace e\in E\vert i\in e\rbrace\vert$ is the degree of the vertex $i$. 
\end{defn}

\begin{defn} The Cayley Graph of a discrete group $L$ with a system of 
generators $S$ is the graph whose vertices are the elements of the group $L$ and 
whose edges are determined by the following condition: if $g$ and $s$ belong to $L$
 then there is an edge from $g$ to $f$ if and only if $ f =  g \ast s$ for some 
 $s\in S\bigcup S^{-1}$.
\end{defn}
Let us consider the Cayley graph of the group $Z_{n}$. 
Note that the Laplacian is a nonnegative operator so all eigenvalues are greater or equal to 0.
If $ n=1 $, then the Laplacian of the Cayley graph of this group is $ \begin{pmatrix} 0 \end{pmatrix}$. 
This matrix has only one eigenvalue which is zero.
If $ n=2 $, then the Laplacian of the Cayley graph of $Z_{2}$ is the matrix

$$ \begin{pmatrix}
 1 & -1 \\ 
 -1 & 1 \\ 
 \end{pmatrix}
$$
The eigenvalues of the Laplacian are $ \lambda = 0 $ and $ \lambda = 2 $.
The Laplacian of $Z_{n}, n>3$ is the next matrix:
$$
\underbrace{\begin{pmatrix}
 2 & -1 & 0 & ... & 0 & -1 \\ 
 -1 & 2 & -1 & ... & 0 & 0 \\ 
 0 & -1 & 2 & ... & 0 & 0 \\ 
 0 & 0 & -1 & ... & 0 & 0 \\ 
 \vdots & \vdots & \vdots & \ddots & \vdots & \vdots \\
 0 & 0 & 0 & ... & 2 & -1 \\ 
 -1 & 0 & 0 & ... & -1 & 2 \\  
\end{pmatrix}}_{n}
$$

\bigskip

This matrix is circulant and its eigenvalues are known. In this paper we use another
method instead of the well-known method of Gray (see~\cite{gray}) by using spectrum to investigate 
 the properties of the spectra and the characteristic polynomials of the Laplacians of cyclic groups.
Let find the determinant of the following matrix. Set $ a = 2-\lambda $.

$$A_{n}:=\underbrace{\begin{vmatrix}
 a & -1 & 0 & ... & 0 & -1 \\ 
 -1 & a & -1 & ... & 0 & 0 \\ 
 0 & -1 & a & ... & 0 & 0 \\ 
 0 & 0 & -1 & ... & 0 & 0 \\ 
 \vdots & \vdots & \vdots & \ddots & \vdots & \vdots \\
 0 & 0 & 0 & ... & a & -1 \\ 
 -1 & 0 & 0 & ... & -1 & a \\  
\end{vmatrix}}_{n} 
=
\underbrace{\begin{vmatrix}
 a & -1 & 0 & ... & 0 & 0 \\ 
 -1 & a & -1 & ... & 0 & 0 \\ 
 0 & -1 & a & ... & 0 & 0 \\ 
 0 & 0 & -1 & ... & 0 & 0 \\ 
 \vdots & \vdots & \vdots & \ddots & \vdots & \vdots \\
 0 & 0 & 0 & ... & a & -1 \\ 
 0 & 0 & 0 & ... & -1 & a \\  
\end{vmatrix}}_{n-1} 
+$$
$$\underbrace{\begin{vmatrix}
 -1 & -1 & 0 & ... & 0 & 0 \\ 
 0 & a & -1 & ... & 0 & 0 \\ 
 0 & -1 & a & ... & 0 & 0 \\ 
 0 & 0 & -1 & ... & 0 & 0 \\
 \vdots & \vdots & \vdots & \ddots & \vdots & \vdots \\ 
 0 & 0 & 0 & ... & a & -1 \\ 
 -1 & 0 & 0 & ... & -1 & a \\  
\end{vmatrix}}_{n-1} 
+(-1)^{n}
\underbrace{\begin{vmatrix}
 -1 & a & -1 & ... & 0 & 0 \\ 
 0 & -1 & a & ... & 0 & 0 \\ 
 0 & 0 & a & ... & 0 & 0 \\ 
 0 & 0 & -1 & ... & 0 & 0 \\
 \vdots & \vdots & \vdots & \ddots & \vdots & \vdots \\ 
 0 & 0 & 0 & ... & -1 & a \\ 
 -1 & 0 & 0 & ... & 0 & -1 \\  
\end{vmatrix}}_{n-1} 
$$
$$ L_{n-1}:= \underbrace{\begin{vmatrix}
 a & -1 & 0 & ... & 0 & 0 \\ 
 -1 & a & -1 & ... & 0 & 0 \\ 
 0 & -1 & a & ... & 0 & 0 \\ 
 0 & 0 & -1 & ... & 0 & 0 \\ 
 \vdots & \vdots & \vdots & \ddots & \vdots & \vdots \\
 0 & 0 & 0 & ... & a & -1 \\ 
 0 & 0 & 0 & ... & -1 & a \\  
\end{vmatrix}}_{n-1} 
=
a
\underbrace{\begin{vmatrix}
 a & -1 & 0 & ... & 0 & 0 \\ 
 -1 & a & -1 & ... & 0 & 0 \\ 
 0 & -1 & a & ... & 0 & 0 \\ 
 0 & 0 & -1 & ... & 0 & 0 \\ 
 \vdots & \vdots & \vdots & \ddots & \vdots & \vdots \\
 0 & 0 & 0 & ... & a & -1 \\ 
 0 & 0 & 0 & ... & -1 & a \\  
\end{vmatrix}}_{n-2}
+$$
$$  
\underbrace{\begin{vmatrix}
 -1 & -1 & 0 & ... & 0 & 0 \\ 
 0 & a & -1 & ... & 0 & 0 \\ 
 0 & -1 & a & ... & 0 & 0 \\ 
 0 & 0 & -1 & ... & 0 & 0 \\
 \vdots & \vdots & \vdots & \ddots & \vdots & \vdots \\ 
 0 & 0 & 0 & ... & a & -1 \\ 
 0 & 0 & 0 & ... & -1 & a \\  
\end{vmatrix}}_{n-1}  
= aL_{n-2}-L_{n-3}\eqno (1)$$ 

$$ \underbrace{\begin{vmatrix}
 -1 & -1 & 0 & ... & 0 & 0 \\ 
 0 & a & -1 & ... & 0 & 0 \\ 
 0 & -1 & a & ... & 0 & 0 \\ 
 0 & 0 & -1 & ... & 0 & 0 \\
 \vdots & \vdots & \vdots & \ddots & \vdots & \vdots \\ 
 0 & 0 & 0 & ... & a & -1 \\ 
 -1 & 0 & 0 & ... & -1 & a \\  
\end{vmatrix}}_{n-1} 
= -L_{n-2} +
\underbrace{\begin{vmatrix}
 0 & -1 & 0 & ... & 0 & 0 \\ 
 0 & a & -1 & ... & 0 & 0 \\ 
 0 & -1 & a & ... & 0 & 0 \\ 
 0 & 0 & -1 & ... & 0 & 0 \\
 \vdots & \vdots & \vdots & \ddots & \vdots & \vdots \\ 
 0 & 0 & 0 & ... & a & -1 \\ 
 -1 & 0 & 0 & ... & -1 & a \\  
\end{vmatrix}}_{n-2} = $$
$$ -L_{n-2} +
\underbrace{\begin{vmatrix}
 0 & -1 & 0 & ... & 0 & 0 \\ 
 0 & a & -1 & ... & 0 & 0 \\ 
 0 & -1 & a & ... & 0 & 0 \\ 
 0 & 0 & -1 & ... & 0 & 0 \\
 \vdots & \vdots & \vdots & \ddots & \vdots & \vdots \\ 
 0 & 0 & 0 & ... & a & -1 \\ 
 -1 & 0 & 0 & ... & -1 & a \\  
\end{vmatrix}}_{n-3} = ... = -L_{n-2}+ 
\begin{vmatrix}
 0 & -1 \\ 
 -1 & a \\ 
\end{vmatrix}
=-L_{n-2}-1$$
$$ \underbrace{\begin{vmatrix}
 -1 & a & -1 & ... & 0 & 0 \\ 
 0 & -1 & a & ... & 0 & 0 \\ 
 0 & 0 & a & ... & 0 & 0 \\ 
 0 & 0 & -1 & ... & 0 & 0 \\
 \vdots & \vdots & \vdots & \ddots & \vdots & \vdots \\ 
 0 & 0 & 0 & ... & -1 & a \\ 
 -1 & 0 & 0 & ... & 0 & -1 \\  
\end{vmatrix}}_{n-1}
=
\underbrace{\begin{vmatrix}
 -1 & a & -1 & ... & 0 & 0 \\ 
 0 & -1 & a & ... & 0 & 0 \\ 
 0 & 0 & a & ... & 0 & 0 \\ 
 0 & 0 & -1 & ... & 0 & 0 \\
 \vdots & \vdots & \vdots & \ddots & \vdots & \vdots \\ 
 0 & 0 & 0 & ... & -1 & a \\ 
 0 & 0 & 0 & ... & 0 & -1 \\  
\end{vmatrix}}_{n-2} +(-1)^{n}L_{n-2} $$

So $$ A_{n}=aL_{n-1}-2L_{n-2}-2\eqno (2)$$ .

Let complete the table of coefficients of $ L_{n} $.

\bigskip

$$\begin{tabular}{|c|c|c|c|c|c|c|c|c|c|c|}
\hline • & 1 & $ a $ & $ a^{2} $ & $ a^{3} $ & $ a^{4} $ & $ a^{5} $ & $ a^{6} $ & $ a^{7} $ & $ a^{8} $ & $ a^{9} $ \\ 
\hline $ L_{1} $ & 0 & 1 & 0 & 0 & 0 & 0 & 0 & 0 & 0 & 0 \\ 
\hline $ L_{2} $ & -1 & 0 & 1 & 0 & 0 & 0 & 0 & 0 & 0 & 0 \\ 
\hline $ L_{3} $ & 0 & -2 & 0 & 1 & 0 & 0 & 0 & 0 & 0 & 0 \\ 
\hline $ L_{4} $ & 1 & 0 & -3 & 0 & 1 & 0 & 0 & 0 & 0 & 0 \\ 
\hline $ L_{5} $ & 0 & 3 & 0 & -4 & 0 & 1 & 0 & 0 & 0 & 0 \\ 
\hline $ L_{6} $ & -1 & 0 & 6 & 0 & -5 & 0 & 1 & 0 & 0 & 0 \\ 
\hline $ L_{7} $ & 0 & -4 & 0 & 10 & 0 & -6 & 0 & 1 & 0 & 0 \\ 
\hline $ L_{8} $ & 1 & 0 & -10 & 0 & 15 & 0 & -7 & 0 & 1 & 0 \\ 
\hline $ L_{9} $ & 0 & 5 & 0 & -20 & 0 & 21 & 0 & -8 & 0 & 1 \\ 
\hline 
\end{tabular}$$ 

\bigskip

We can complete the table of coefficients of $ A_{n} $ from (2) and the table of coefficients of $ L_{n} $.

$$\begin{tabular}{|c|c|c|c|c|c|c|c|c|c|c|c|c|}
\hline • & 1 & $ a $ & $ a^{2} $ & $ a^{3} $ & $ a^{4} $ & $ a^{5} $ & $ a^{6} $ & $ a^{7} $ & $ a^{8} $ & $ a^{9} $ & $ a^{10} $ & $ a^{11} $ \\ 
\hline $ A_{3} $ & -2 & -3 & 0 & 1 & 0 & 0 & 0 & 0 & 0 & 0 & 0 & 0\\ 
\hline $ A_{4} $ & 0 & 0 & -4 & 0 & 1 & 0 & 0 & 0 & 0 & 0 & 0 & 0\\ 
\hline $ A_{5} $ & -2 & 5 & 0 & -5 & 0 & 1 & 0 & 0 & 0 & 0 & 0 & 0\\ 
\hline $ A_{6} $ & -4 & 0 & 9 & 0 & -6 & 0 & 1 & 0 & 0 & 0 & 0 & 0\\ 
\hline $ A_{7} $ & -2 & -7 & 0 & 14 & 0 & -7 & 0 & 1 & 0 & 0 & 0 & 0\\ 
\hline $ A_{8} $ & 0 & 0 & -16 & 0 & 20 & 0 & -8 & 0 & 1 & 0 & 0 & 0\\ 
\hline $ A_{9} $ & -2 & 9 & 0 & -30 & 0 & 27 & 0 & -9 & 0 & 1 & 0 & 0\\ 
\hline $ A_{10} $ & -4 & 0 & 25 & 0 & -50 & 0 & 35 & 0 & -10 & 0 & 1 & 0\\ 
\hline $ A_{11} $ & -2 & -11 & 0 & 55 & 0 & -77 & 0 & 44 & 0 & -11 & 0 & 1\\ 
\hline
\end{tabular} $$
\section{Main results} % or any thing that you want
\begin{lemma} \label{f}
$\forall n\in N, \forall k\subseteq [1,..,n]: L_{n}=L_{n-k}L_{k}-L_{n-k-1}L_{k-1} $.
\end{lemma}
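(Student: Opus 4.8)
The crux is the three-term recurrence $L_m = aL_{m-1}-L_{m-2}$, which is precisely what relation (1) records. My plan is first to pin down the boundary conventions that make this recurrence valid for every index appearing in the statement. The determinants $L_m$ are a priori defined only for $m\ge 1$, but extending the recurrence downward forces $L_0=1$ and $L_{-1}=0$: from $L_2=aL_1-L_0$ together with the tabulated values $L_1=a$ and $L_2=a^2-1$ one reads off $L_0=1$, and then $L_1=aL_0-L_{-1}$ gives $L_{-1}=0$. With these conventions $L_m=aL_{m-1}-L_{m-2}$ holds for all $m\ge 1$, and the extreme case $k=n$ of the lemma, where $L_0$ and $L_{-1}$ occur, is handled automatically.

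The main step is to show that the right-hand side is in fact independent of $k$. I would introduce
$$ g(k):=L_{n-k}L_{k}-L_{n-k-1}L_{k-1}, \qquad 0\le k\le n, $$
and compute the consecutive difference
$$ g(k+1)-g(k)=L_{n-k-1}\bigl(L_{k+1}+L_{k-1}\bigr)-L_{k}\bigl(L_{n-k}+L_{n-k-2}\bigr). $$
Feeding in the recurrence in the two rearranged forms $L_{k+1}+L_{k-1}=aL_{k}$ and $L_{n-k}+L_{n-k-2}=aL_{n-k-1}$ collapses the right-hand side to $aL_{n-k-1}L_{k}-aL_{k}L_{n-k-1}=0$. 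Hence $g$ is constant on $\{0,1,\dots,n\}$; since $g(0)=L_nL_0-L_{n-1}L_{-1}=L_n$, we conclude $g(k)=L_n$ for every $k$, which is exactly the asserted identity for $1\le k\le n$.

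As an independent cross-check I would note the companion-matrix proof. Writing $M=\left(\begin{smallmatrix} a & -1 \\ 1 & 0 \end{smallmatrix}\right)$, a short induction using the same recurrence yields $M^{m}=\left(\begin{smallmatrix} L_{m} & -L_{m-1} \\ L_{m-1} & -L_{m-2} \end{smallmatrix}\right)$; then the identity $M^{n}=M^{n-k}M^{k}$, read off in the top-left entry, reproduces $L_n=L_{n-k}L_k-L_{n-k-1}L_{k-1}$. I would keep the telescoping argument as the primary proof, since it invokes only relation (1).

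The only genuine obstacle is bookkeeping at the edges: one must check that the recurrence and the difference computation stay valid when the indices $k-1$, $n-k-1$, or $n-k-2$ drop to $0$ or $-1$. This is exactly why fixing $L_0=1$ and $L_{-1}=0$ at the outset is essential; once those conventions are in force the telescoping is purely formal and no separate case analysis on $n$ or $k$ is needed.
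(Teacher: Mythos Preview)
Your telescoping argument is correct and is essentially the paper's induction on $k$: the paper verifies directly that $L_kL_{n-k}-L_{k-1}L_{n-k-1}=L_{k+1}L_{n-k-1}-L_kL_{n-k-2}$ by applying the recurrence to $L_{n-k}$ and regrouping, which is exactly your computation $g(k+1)-g(k)=0$. Your explicit handling of the boundary values $L_0=1$, $L_{-1}=0$ and the companion-matrix cross-check are tidy additions, but the core argument coincides with the paper's.
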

\begin{proof} $ L_{n}=aL_{n-1}-L_{n-2}=a(aL_{n-2}-L_{n-3})-L_{n-2}=$
 $(a^{2}-1)L_{n-2}-aL_{n-3}=L_{2}L_{n-2}-L_{1}L_{n-3}$.\\
Assume $ L_{n}=L_{k}L_{n-k}-L_{k-1}L_{n-k-1}$. Then $ L_{n}=L_{k}L_{n-k}-L_{k-1}L_{n-k-1}=L_{k}(aL_{n-k-1}-L_{n-k-2})-L_{k-1}L_{n-k-1}=(aL_{k}-L_{k-1})L_{n-k-1}-L_{k}L_{n-k-2}=L_{k+1}L_{n-k-1}-L_{k}L_{n-k-2}$.
\end{proof}
\begin{lemma} \label{g}
$\forall n\in N: L^{2}_{n-1}=L_{n-2}L_{n}+1$.
\end{lemma}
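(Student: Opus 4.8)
The plan is to prove the identity by induction on $n$, using the three-term recurrence $L_m = aL_{m-1} - L_{m-2}$ that was established in $(1)$ and reused at the start of the proof of Lemma \ref{f}. The statement $L_{n-1}^2 = L_{n-2}L_n + 1$ is exactly the Cassini-type identity for the Chebyshev-like sequence $\{L_m\}$, so one expects a short two-line induction rather than any heavy computation. Before starting the induction I would fix the convention $L_0 = 1$, which is forced by the recurrence together with the table values $L_1 = a$ and $L_2 = a^2 - 1$ (indeed $L_2 = aL_1 - L_0$ gives $L_0 = 1$); this is the only real bookkeeping point, since the identity involves the index $n-2$ and so needs the sequence to be well-defined down to $L_0$.

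For the base case I would take $n = 2$ and check directly that $L_1^2 = a^2$ equals $L_0 L_2 + 1 = (a^2 - 1) + 1 = a^2$. For the inductive step I would assume $L_{n-1}^2 = L_{n-2}L_n + 1$ and prove the next instance $L_n^2 = L_{n-1}L_{n+1} + 1$. Substituting $L_{n+1} = aL_n - L_{n-1}$ gives
$$L_{n-1}L_{n+1} + 1 = aL_{n-1}L_n - L_{n-1}^2 + 1,$$
and the inductive hypothesis rewrites $-L_{n-1}^2 + 1$ as $-L_{n-2}L_n$, so the right-hand side collapses to $L_n(aL_{n-1} - L_{n-2}) = L_n \cdot L_n = L_n^2$, again by the recurrence. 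This closes the induction.

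There is essentially no hard obstacle here; the only thing to be careful about is to phrase the induction on the correct index so that the hypothesis at ``$n-1$'' feeds cleanly into the step at ``$n$'', and to make sure the boundary value $L_0 = 1$ is justified rather than silently assumed. As a conceptual alternative, which I would mention but not need, the identity is the statement that the $2\times 2$ determinant $\det\begin{pmatrix} L_n & L_{n-1} \\ L_{n-1} & L_{n-2}\end{pmatrix} = L_nL_{n-2} - L_{n-1}^2$ is independent of $n$: writing $v_m = \begin{pmatrix} L_m \\ L_{m-1}\end{pmatrix}$ and using $v_m = Mv_{m-1}$ with the transfer matrix $M = \begin{pmatrix} a & -1 \\ 1 & 0\end{pmatrix}$ of determinant $1$, one gets $[\,v_n \mid v_{n-1}\,] = M\,[\,v_{n-1}\mid v_{n-2}\,]$, so the determinant is constant and equals its value $L_2L_0 - L_1^2 = -1$ at the bottom. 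This reproves the claim in a single line once $\det M = 1$ is noted.
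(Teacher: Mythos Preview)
Your proof is correct and follows essentially the same approach as the paper: induction on $n$ using the three-term recurrence $L_m = aL_{m-1} - L_{m-2}$. Your execution is cleaner---you introduce $L_0=1$ to start the base case at $n=2$ and compute $L_{n-1}L_{n+1}+1$ directly, whereas the paper starts at $n=3$ and verifies the target identity by expanding both sides and reducing to $0=0$; the transfer-matrix remark you add at the end is a nice conceptual alternative not present in the paper.
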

\begin{proof} $ L_{1}=a, L_{2}=a^{2}-1, L_{3}=a^{3}-2a$. 
$ L^{2}_{2}=L_{3}L_{1}+1 $. Assume $ L^{2}_{k-1}=L_{k-2}L_{k}+1$. Then $L^{2}_{k}=L_{k-1}L_{k+1}+1$; 
$ L^{2}_{k-1}=L_{k-2}L_{k}+1 $;
$ L^{2}_{k-1}=L_{k-2}(aL_{k-1}-L_{k-2})+1 $ and
\begin{equation} \label{h}
 L^{2}_{k-1}+L^{2}_{k-2}-1=aL_{k-1}L_{k-2}
\end{equation}
$L^{2}_{k}=L_{k-1}L_{k+1}+1$;
$(aL_{k-1}-L_{k-2})^{2}=L_{k-1}(aL_{k}-L_{k-1})+1$;
$a^{2}L_{k-1}^{2}-2aL_{k-1}L_{k-2}+L_{k-2}^{2}=aL_{k}L_{k-1}-L_{k-1}^{2}+1$;
$a^{2}L_{k-1}^{2}-2aL_{k-1}L_{k-2}=aL_{k}L_{k-1}-L_{k-1}^{2}-L_{k-2}^{2}+1$.
Then by Equation~\eqref{h} $aL_{k-1}^{2}-aL_{k-1}L_{k-2}-aL_{k}L_{k-1}=0$; $aL_{k-1}(aL_{k-1}-L_{k-2})-aL_{k}L_{k-1}=0$;
$aL_{k}L_{k-1}-aL_{k}L_{k-1}=0$. 

\end{proof}
\begin{lemma} \label{e}
If $ \lambda $ is the eigenvalue of Laplacian of $ Z_{n} $, then $ \lambda $ is the eigenvalue of
the Laplacian of $ Z_{2^{k}n}, \forall k\in N $.
\end{lemma}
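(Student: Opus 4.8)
The plan is to reduce everything to a single \emph{doubling identity} for the characteristic polynomials and then induct on $k$. Since the matrix under $A_n$ is exactly $L(Z_n)-\lambda I$ with $a=2-\lambda$, the number $\lambda$ is an eigenvalue of the Laplacian of $Z_n$ precisely when $A_n=0$ (viewed as a polynomial in $a$, equivalently in $\lambda$). Thus it suffices to show that $A_n$ divides $A_{2n}$ as polynomials: then every root of $A_n$ is a root of $A_{2n}$, and iterating gives $A_n\mid A_{2^k n}$, which is the assertion. Concretely, I expect to prove the clean factorization
\begin{equation*}
A_{2n}=A_n\,(A_n+4)=A_n^2+4A_n .
\end{equation*}

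To establish it I would first put $A_n$ into a symmetric form. From $A_n=aL_{n-1}-2L_{n-2}-2$ and the basic recursion $L_n=aL_{n-1}-L_{n-2}$ one gets $A_n=L_n-L_{n-2}-2$, so that $A_n+2=L_n-L_{n-2}$ and $A_n+4=L_n-L_{n-2}+2$. Next I would feed the two natural index choices into Lemma~\ref{f}: taking $k=n$ (with $2n$ in the role of $n$) yields $L_{2n}=L_n^2-L_{n-1}^2$, and taking $k=n-1$ (with $2n-2$ in that role) yields $L_{2n-2}=L_{n-1}^2-L_{n-2}^2$. Subtracting gives $L_{2n}-L_{2n-2}=L_n^2-2L_{n-1}^2+L_{n-2}^2$. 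Now Lemma~\ref{g} in the form $L_{n-1}^2=L_nL_{n-2}+1$ turns the right-hand side into $L_n^2-2L_nL_{n-2}+L_{n-2}^2-2=(L_n-L_{n-2})^2-2$. Hence $A_{2n}=L_{2n}-L_{2n-2}-2=(L_n-L_{n-2})^2-4=(A_n+2)^2-4=A_n(A_n+4)$, which is the desired identity; substituting a root $a_0$ of $A_n$ gives $A_{2n}(a_0)=0$, and induction on $k$ finishes the proof.

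The main obstacle is not any single computation but spotting the factorization itself: one must recognize that $A_n+2=L_n-L_{n-2}$, that the two \emph{different} specializations $k=n$ and $k=n-1$ of Lemma~\ref{f} are what produce a difference of squares, and that Lemma~\ref{g} is exactly the identity needed to collapse $L_n^2-2L_{n-1}^2+L_{n-2}^2$ into $(L_n-L_{n-2})^2-2$. It is worth checking the identity against the coefficient tables (for instance $A_6=A_3(A_3+4)$ with $A_3=a^3-3a-2$) before committing to the induction. Finally, I would remark that a purely linear-algebraic argument also works and in fact proves more: periodically extending an eigenvector $v$ of $L(Z_n)$ to a vector $w$ on $Z_{mn}$ by $w_i=v_{i\bmod n}$ gives, since the Laplacian is a nearest-neighbour operator, $L(Z_{mn})w=\lambda w$ with $w\neq 0$; this shows $\lambda$ is an eigenvalue of $Z_{mn}$ for \emph{every} multiple $m$, the case $m=2^k$ being the statement at hand.
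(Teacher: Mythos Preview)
Your proof is correct and follows essentially the same route as the paper: both arguments use Lemma~\ref{f} and Lemma~\ref{g} to derive the doubling factorization $A_{2n}=A_n(A_n+4)$ and then induct on $k$. Your computation is in fact a bit tidier---rewriting $A_n=L_n-L_{n-2}-2$ first and combining $L_{2n}=L_n^2-L_{n-1}^2$ with $L_{2n-2}=L_{n-1}^2-L_{n-2}^2$ leads more directly to $(L_n-L_{n-2})^2-4$ than the paper's equivalent manipulation via $aL_{n-1}-2L_{n-2}$---and your closing periodic-extension remark is a nice alternative that the paper does not mention.
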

\begin{proof} $ A_{2n}=aL_{2n-1}-2L_{2n-2}-2$. Then by Lemma~\ref{f} 
we have $ L_{2n-1}=L_{n}L_{n-1}-L_{n-1}L_{n-2}$ and 
$ A_{2n}=aL_{n}L_{n-1}-aL_{n-1}L_{n-2}-2L_{2n-2}-2 =a^{2}L_{n-1}^{2}-2aL_{n-1}L_{n-2}-2L_{2n-2}-2$.
The following are routine calculations: 
$ L_{2n-2}=L_{n-1}^{2}-L_{n-2}^{2} $; $ A_{2n}=a^{2}L_{n-1}^{2}-2aL_{n-1}L_{n-2}-2L_{n-1}^{2}+2L_{n-2}^{2}-2$;
$A_{2n}=(a^{2}L_{n-1}^{2}-4aL_{n-1}L_{n-2}+4L_{n-2}^{2}-4)-2L_{n-1}^{2}; -2L_{n-1}^{2}+2aL_{n-1}L_{n-2}+2$.
Then by Lemma~\ref{g} $-2L_{n-1}^{2}-2L_{n-1}^{2}+2aL_{n-1}L_{n-2}+2=0$.So $A_{2n}=a^{2}L_{n-1}^{2}-4aL_{n-1}L_{n-2}+4L_{n-2}^{2}-4=
(aL_{n-1}-2L_{n-2})^{2}-4=
(aL_{n-1}-2L_{n-2}-2)(aL_{n-1}-2L_{n-2}+2)=A_{n}(A_{n}+4)$.

Note that $ A_{2}=a^{2}-4 $ is not the determinant of the Laplacian of $ Z_{2} $ and
 $ A_{1}=a-2 $ is not the determinant of the Laplacian of the trivial group $E$. But $ \lambda =0 $ 
is the eigenvalue of all Laplacian because each Laplacian is a singular matrix. 
However, by the Table 2 we see that$ \lambda =2 $ is the eigenvalue of Laplacian with the multiplicity $ 2 $ of 
$ Z_{4k}, k\in N $.

\end{proof}
Note that $ \lambda =2 $ is not  the eigenvalue of the  Laplacian  of $ Z_{4k-2}, k\in N $. It is easy to see that $A_{4k-2}(0)=-2$.
\begin{theorem} \label{d}
 If $ \lambda $ is the eigenvalue of the Laplacian of $ Z_{n}, n\geq 3 $, then $ \lambda $ is 
the eigenvalue of the Laplacian of $ Z_{kn}, \forall k\in N $.
\end{theorem}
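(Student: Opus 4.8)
The plan is to reduce the statement to a polynomial divisibility $A_n \mid A_{kn}$ in the variable $a = 2-\lambda$, so that every root of $A_n$ is automatically a root of $A_{kn}$; since for $n\geq 3$ the polynomial $A_n$ is exactly $\det\bigl(L(Z_n)-\lambda I\bigr)$ (this is where the hypothesis $n\geq 3$ enters, cf. the remark after Lemma~\ref{e}), its roots $\lambda=2-a$ are precisely the eigenvalues, and the divisibility then transfers eigenvalues from $Z_n$ to $Z_{kn}$. The first step is to put $A_m$ into a shape adapted to induction. Combining eq.~(2) with the three-term recurrence $L_m = aL_{m-1}-L_{m-2}$ from eq.~(1) gives $aL_{m-1}=L_m+L_{m-2}$, whence $A_m = L_m-L_{m-2}-2$. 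I therefore set $B_m := A_m+2 = L_m - L_{m-2}$ and observe, by applying the $L$-recurrence termwise, that $B_m$ satisfies the same recurrence $B_{m+1}=aB_m-B_{m-1}$, with $B_0 = L_0-L_{-2}=2$ and $B_1 = a$.

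The technical heart is an addition formula for the $B$-sequence, the analogue for $B$ of Lemma~\ref{f}:
\[
B_{p+q} = B_p B_q - B_{p-q}, \qquad 0\le q\le p .
\]
I would prove this by induction on $q$ with $p$ fixed: the case $q=0$ is just $B_0=2$, the case $q=1$ is the recurrence itself, and in the inductive step I combine $B_{p+q+1}=aB_{p+q}-B_{p+q-1}$ with the hypothesis at $q$ and $q-1$, using the recurrence once more to collapse $aB_{p-q}-B_{p-q+1}$ into $B_{p-q-1}$. (Morally this is the identity $2\cos(p+q)\theta + 2\cos(p-q)\theta = (2\cos p\theta)(2\cos q\theta)$ under $a=2\cos\theta$, but the induction keeps the argument purely algebraic, in keeping with the aim of the paper.) Specializing to $p=kn$, $q=n$ (so that $p-q=(k-1)n\ge 0$ and no negative indices occur) gives $B_{(k+1)n}=B_n B_{kn}-B_{(k-1)n}$, and translating back through $B_m=A_m+2$ yields the clean recursion
\[
A_{(k+1)n} = A_n A_{kn} + 2A_n + 2A_{kn} - A_{(k-1)n} .
\]

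Finally I would prove $A_n \mid A_{kn}$ by induction on $k$. The base cases are $A_{0}=B_0-2=0$ and $A_{1\cdot n}=A_n$; for the step, assuming $A_n$ divides both $A_{kn}$ and $A_{(k-1)n}$, the displayed recursion exhibits $A_{(k+1)n}$ as a sum whose first two summands carry the explicit factor $A_n$ and whose remaining summands are divisible by $A_n$ by hypothesis, so $A_n\mid A_{(k+1)n}$. This completes the induction, and hence the theorem. The main obstacle I anticipate is establishing the addition formula cleanly: one must set up the single induction on $q$ correctly and verify that the boundary term $B_{p-q-1}$ stays at a nonnegative index throughout the range $q\le n\le kn$ actually used, after which every remaining step is routine bookkeeping. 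As a consistency check, the case $k=1$ of the recursion reads $A_{2n}=A_n^2+4A_n=A_n(A_n+4)$, recovering the identity of Lemma~\ref{e}.
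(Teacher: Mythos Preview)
Your proof is correct and ultimately rests on the same recursion
\[
A_{(k+1)n} = (A_n+2)A_{kn} + 2A_n - A_{(k-1)n}
\]
and the same induction on $k$ that the paper uses. The difference is in how this recursion is obtained. The paper expands $A_{(k+1)n}=L_{(k+1)n}-L_{(k+1)n-2}-2$ directly with Lemma~\ref{f}, carries along an auxiliary expression that it shows, after a page of index-shifting, equals $-A_{(k-1)n}$, and only then assembles the recursion. You instead pass to $B_m=A_m+2=L_m-L_{m-2}$, observe that $B_m$ obeys the same three-term recurrence as $L_m$ with initial data $B_0=2$, $B_1=a$, and prove the Chebyshev/Lucas product-to-sum identity $B_{p+q}=B_pB_q-B_{p-q}$ by a short induction on $q$; specializing $(p,q)=(kn,n)$ and translating back gives the recursion in one line. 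Your route is shorter, more transparent (the identity $2\cos(p+q)\theta+2\cos(p-q)\theta=(2\cos p\theta)(2\cos q\theta)$ explains \emph{why} such a formula must exist), and as a bonus recovers Lemma~\ref{e} as the special case $k=1$ rather than needing it as a separate input. The paper's approach, by contrast, stays entirely inside the $L$-calculus already developed and avoids introducing and justifying the extension of $L_m$ (hence $B_m$) to indices $m\le 0$; if you want to avoid that extension too, you can simply \emph{define} $B_m$ by the recurrence with $B_0=2$, $B_1=a$ and check $B_m=A_m+2$ at two consecutive values $m\ge 3$.
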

\begin{proof} Lemma~\ref{e}  yields that $ A_{n} $ is a divisor of $ A_{2n} $. Now suppose $ A_{n} $ is a divider of $ A_{mn}, \forall m\leq k $.

$ A_{n}=aL_{n-1}-2L_{n-2}-2=L_{n}-L_{n-2}-2$;
$ A_{(k+1)n}=L_{(k+1)n}-L_{(k+1)n-2}-2=L_{kn}L_{n}-L_{kn-1}L_{n-1}-L_{kn}L_{n-2}+L_{kn-1}L_{n-3}-2=L_{kn}(L_{n}-L_{n-2}-2)+2L_{kn}-2+L_{kn-1}L_{n-3}-L_{kn-1}L_{n-1}=A_{n}L_{kn}+2L_{kn}-2+L_{kn-1}L_{n-3}-aL_{kn-2}L_{n-1}+L_{kn-3}L_{n-1}+L_{kn-2}L_{n-2}-L_{kn-2}L_{n-2}=
A_{n}L_{kn}+2L_{kn}-2+L_{kn-1}L_{n-3}+L_{kn-3}L_{n-1}-(L_{kn-2}L_{n}-L_{kn-2}L_{n-2}-2L_{kn-2})-2L_{kn-2}-2L_{kn-2}L_{n-2}=A_{n}L_{kn}-A_{n}L_{kn-2}+2(L_{kn}-L_{kn-2}-2)+2+L_{kn-1}L_{n-3}+L_{kn-3}L_{n-1}-2L_{kn-2}L_{n-2}=A_{n}L_{kn}-A_{n}L_{kn-2}+2A_{kn}+B$.

$ B=2+L_{kn-1}L_{n-3}+L_{kn-3}L_{n-1}-2L_{kn-2}L_{n-2}=
2+L_{kn-2}L_{n-4}+L_{(k+1)n-4}+L_{kn-4}L_{n-2}+L_{(k+1)n-4}-2L_{kn-3}L_{n-3}-
2L_{(k+1)n-4}=\ldots=2+L_{(k-1)n+3}L_{1}+L_{(k-1)n+1}L_{3}-2L_{(k-1)n+2}L_{2}= 2+L_{(k-1)n+2}+L_{(k-1)n+4}+L_{(k-1)n}L_{2}+L_{(k-1)n+4}-2L_{(k-1)n+1}L_{1}-
2L_{(k-1)n+4}=(a^{2}-1)L_{(k-1)n}-aL_{(k-1)n-1}+(a^{2}-1)L_{(k-1)n}-2a^{2}L_{(k-1)n}+2aL_{(k-1)n-1}+2=
-2L_{(k-1)n}+aL_{(k-1)n-1}-L_{(k-1)n-2}+L_{(k-1)n-2}+2=-L_{(k-1)n}+L_{(k-1)n-2}+2=-A_{(k-1)n}$.

So $ A_{(k+1)n}=A_{n}L_{kn}-A_{n}L_{kn-2}+A_{kn}-A_{(k-1)n}=(A_{n}+2)A_{kn}+2A_{n}-A_{(k-1)n} $ and $ A_{n} $ is a divider of $ A_{(k+1)n} $ .

\end{proof}
For example we can prove that the Laplacian spectra of $Z_{2}\times Z_{3}$ and $Z_{6}$ are 
different. The graph of $Z_{2}\times Z_{3}$ is isomorphic to the complement of the graph
of $Z_{6}$. It is well known that if $\lambda\neq 0$ is the eigenvalue of $L(G)$, then $n-\lambda$
is the eigenvalue of $L(G^{C})$, see~\cite{turker}. Since $\lambda=4 $ is the eigenvalue of $Z_{6}$
it follows that $\lambda=2$ is the eigenvalue of $Z_{2}\times Z_{3}$ and $\lambda=2$ is not the eigenvalue 
of $Z_{6}$. Therefore, the spectra of isomorphic groups can be different. Note that $Z_{2}\times Z_{2}\ncong Z_{4}$ but 
their graphs and Laplacian spectra coincide.
\begin{lemma} 
$ A_{kn+p}=(A_{p}+2)A_{kn}+2A_{p}-A_{kn-p} $.
\end{lemma}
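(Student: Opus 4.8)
The key observation is that this lemma is simply the general form of the identity that closed the proof of Theorem~\ref{d}: writing $m:=kn$, the theorem established the case $p=n$ (where $A_{kn-p}=A_{(k-1)n}$), and the present statement only relaxes the constraint $p=n$. So the plan is to isolate the algebraic core of that computation and exhibit that it never used $p=n$.

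First I would pass from $A$ to the auxiliary quantity $C_j:=A_j+2=L_j-L_{j-2}$, using the relation $A_j=L_j-L_{j-2}-2$ already recorded in the proof of Theorem~\ref{d}. Substituting $A_j=C_j-2$ into both sides and writing $m:=kn$, the claimed identity collapses, after the additive constants cancel, to the single product-to-sum relation
$$C_m C_p = C_{m+p}+C_{m-p}.$$
Thus proving this clean identity for the sequence $C_j$ suffices, and it is the whole content of the lemma.

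To prove the product-to-sum relation I would expand $C_mC_p=(L_m-L_{m-2})(L_p-L_{p-2})$ into four products and apply Lemma~\ref{f}, in the form $L_aL_b=L_{a+b}+L_{a-1}L_{b-1}$, to each factor. The pure ``diagonal'' terms collect as $L_{m+p}-2L_{m+p-2}+L_{m+p-4}=C_{m+p}-C_{m+p-2}$, while the four residual products reassemble as $(L_{m-1}-L_{m-3})(L_{p-1}-L_{p-3})=C_{m-1}C_{p-1}$, giving
$$C_m C_p - C_{m-1}C_{p-1} = C_{m+p}-C_{m+p-2}.$$
Setting $D_{m,p}:=C_mC_p-C_{m+p}-C_{m-p}$, this recursion yields $D_{m,p}=D_{m-1,p-1}$, so descent lowers $p$ to $0$; since $C_0=L_0-L_{-2}=1-(-1)=2$ one checks $D_{m-p,0}=2C_{m-p}-2C_{m-p}=0$, and the identity follows.

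The main obstacle I anticipate is bookkeeping at the bottom of the index range: Lemma~\ref{f} and the recurrence get applied with indices as small as $0,-1,-2$, where the stated hypothesis $1\le k\le n$ of Lemma~\ref{f} is strained. I would therefore first fix the conventions $L_0=1$, $L_{-1}=0$, $L_{-2}=-1$ forced by $L_n=aL_{n-1}-L_{n-2}$, and verify that the four-product expansion above remains valid there; once these boundary values are in place the descent is automatic. As a more paper-faithful alternative, should the product-to-sum route be judged too far from the methods at hand, one can instead mirror the proof of Theorem~\ref{d} verbatim with the splitting parameter $n$ replaced by $p$, telescoping the analogous cross term $B$ via Lemma~\ref{g}; that path is longer but uses only tools already developed.
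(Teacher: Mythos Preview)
Your argument is correct, and it takes a genuinely different route from the paper. The paper does precisely the ``paper-faithful alternative'' you sketch at the end: it mirrors the proof of Theorem~\ref{d} with the splitting parameter $n$ replaced by $p$, expanding $A_{kn+p}=L_{kn+p}-L_{kn+p-2}-2$ via Lemma~\ref{f}, peeling off $A_{p}L_{kn}-A_{p}L_{kn-2}+2A_{kn}$, and then telescoping the residual cross term $B$ step by step down to $-A_{kn-p}$ (the telescoping uses only Lemma~\ref{f}, not Lemma~\ref{g}). Your primary approach is cleaner: passing to $C_{j}=A_{j}+2$ collapses the statement to the single product-to-sum law $C_{m}C_{p}=C_{m+p}+C_{m-p}$, which you dispatch by a one-line descent $D_{m,p}=D_{m-1,p-1}$. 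This is more conceptual---indeed $C_{n}=aC_{n-1}-C_{n-2}$ follows from Lemma~\ref{c}, so $C_{n}$ is a Chebyshev-type sequence and your identity is the standard one for such sequences---whereas the paper's computation stays with the raw $L$-determinants and must track many more terms. Your caveat about extending $L_{n}$ to indices $0,-1,-2$ is the only real gap to close, and you have identified both the issue and its resolution correctly.
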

\begin{proof}
$ A_{kn+p}=L_{kn+p}-L_{kn+p-2}-2=L_{kn}L_{p}-L_{kn-1}L_{p-1}-L_{kn}L_{np-2}+L_{kn-1}L_{p-3}-2=L_{kn}(L_{p}-L_{p-2}-2)+2L_{kn}-2+L_{kn-1}L_{n-3}-L_{kn-1}L_{p-1}=A_{p}L_{kn}+2L_{kn}-2+L_{kn-1}L_{p-3}-aL_{kn-2}L_{p-1}+L_{kn-3}L_{p-1}+L_{kn-2}L_{p-2}-L_{kn-2}L_{p-2}=
A_{p}L_{kn}+2L_{kn}-2+L_{kn-1}L_{p-3}+L_{kn-3}L_{p-1}-(L_{kn-2}L_{p}-L_{kn-2}L_{p-2}-2L_{kn-2})-2L_{kn-2}-2L_{kn-2}L_{p-2}=A_{p}L_{kn}-A_{p}L_{kn-2}+2(L_{kn}-L_{kn-2}-2)+2+L_{kn-1}L_{p-3}+L_{kn-3}L_{p-1}-2L_{kn-2}L_{p-2}=A_{p}L_{kn}-A_{p}L_{kn-2}+2A_{kn}+B$.

$ B=2+L_{kn-1}L_{p-3}+L_{kn-3}L_{p-1}-2L_{kn-2}L_{p-2}=
2+L_{kn-2}L_{p-4}+L_{(k+1)n-4}+L_{kn-4}L_{p-2}+L_{(k+1)n-4}-2L_{kn-3}L_{p-3}-
2L_{(k+1)n-4}=\ldots=2+L_{(k-1)n+3+(n-p)}
L_{1}+L_{(k-1)n+1+(n-p)}L_{3}-2L_{(k-1)n+2+(n-p)}L_{2}= 2+L_{(k-1)n+2+(n-p)}+L_{(k-1)n+4+(n-p)}+L_{(k-1)n+(n-p)}L_{2}+L_{(k-1)n+4+(n-p)}-2L_{(k-1)n+1+(n-p)}L_{1}-
2L_{(k-1)n+4+(n-p)}=\ldots=-A_{(k-1)n+(n-p)}$.
 
So $ A_{kn+p}=A_{p}L_{kn}-A_{p}L_{kn-2}+2A_{kn}-A_{(k-1)n+(n-p)}=(A_{n}+2)A_{kn}+2A_{p}-A_{kn-p} $.\\
By Theorem~\ref{d} we get $$ A_{n+p}=A_{n}(A_{p}+2)+2A_{p}-A_{n-p},p<n\eqno (4)$$ .
\end{proof}
\begin{theorem} 
If $ \lambda \neq 2$ is the eigenvalue of Laplacian of $ Z_{n} $ and $ Z_{m} $, then $ \lambda $ is the eigenvalue 
of the Laplacian of $ Z_{d}$, where $d$ is the greatest common divisor of $m$ and $n$. Moreover, If 
$ \lambda =4$ is the eigenvalue of the Laplacian of $Z_{n}$, then $\exists k\in N: n=2k$. 
Also, if $ \lambda =2$ is the eigenvalue of the Laplacian of $ Z_{n} $, then 
$\exists k\in N: n=4k$ or $ n=2 $.
\end{theorem}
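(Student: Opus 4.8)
The plan is to reduce all three assertions to one transparent description of the zeros of $A_n$. Since $L(Z_n)$ is a nonnegative operator whose largest eigenvalue is at most $4$, every eigenvalue $\lambda$ gives $a=2-\lambda\in[-2,2]$, so I may set $a=2\cos\theta$ with a unique $\theta\in[0,\pi]$. Starting from $L_0=1$, $L_1=a$ and the recurrence $L_n=aL_{n-1}-L_{n-2}$, a one-line induction shows $L_n=\frac{\sin((n+1)\theta)}{\sin\theta}$. Substituting this into $A_n=L_n-L_{n-2}-2$ (equation~(2)) and applying the sum-to-product identity $\sin((n+1)\theta)-\sin((n-1)\theta)=2\cos(n\theta)\sin\theta$ collapses everything to
$$A_n = 2\cos(n\theta)-2 = -4\sin^2(n\theta/2),$$
equivalently the polynomial identity $A_n(a)=2T_n(a/2)-2$ with $T_n$ the Chebyshev polynomial of the first kind (this polynomial form also covers the boundary values $a=\pm2$). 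Consequently, for $n\geq 3$, where $A_n$ is genuinely the characteristic polynomial of $L(Z_n)$, the number $\lambda$ is an eigenvalue of $L(Z_n)$ if and only if $A_n(\lambda)=0$, i.e. if and only if $n\theta\in 2\pi\mathbb{Z}$.

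With this description the last two assertions are immediate. For $\lambda=4$ we have $\theta=\pi$, so $A_n(4)=2\cos(n\pi)-2=2(-1)^n-2$, which vanishes exactly when $n$ is even; this is the claim that $\lambda=4$ forces $n=2k$. For $\lambda=2$ we have $\theta=\pi/2$, so $A_n(2)=2\cos(n\pi/2)-2$, which vanishes exactly when $4\mid n$; adding the direct observation that $L(Z_2)$ has true spectrum $\{0,2\}$ (a value $A_2$ does not detect, since $A_2(2)=-4$) yields the stated dichotomy $n=4k$ or $n=2$.

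For the first assertion I would argue directly through the greatest common divisor. If $\lambda\neq 2$ is an eigenvalue of both $Z_n$ and $Z_m$, then $n\theta\in 2\pi\mathbb{Z}$ and $m\theta\in 2\pi\mathbb{Z}$; writing $d=\gcd(m,n)=un+vm$ with $u,v\in\mathbb{Z}$ (Bézout) gives $d\theta=u(n\theta)+v(m\theta)\in 2\pi\mathbb{Z}$, whence $A_d(\lambda)=2\cos(d\theta)-2=0$. For $d\geq 3$ this is exactly the statement that $\lambda$ is an eigenvalue of $L(Z_d)$. The same conclusion can be reached algebraically from Theorem~\ref{d} and equation~(4) by running the subtractive Euclidean algorithm on the indices: using Theorem~\ref{d} one checks that a common root of $A_n$ and $A_m$ is a common root of $A_n$ and $A_{m\bmod n}$, and iterating lands on $A_{\gcd(m,n)}$; the Bézout computation above is merely shorter.

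The step I expect to be the genuine obstacle is the passage from $A_d(\lambda)=0$ to ``$\lambda$ is an eigenvalue of $L(Z_d)$'' in the small cases $d\in\{1,2\}$ — precisely the exceptional indices already flagged after Lemma~\ref{e}, where $A_1=a-2$ and $A_2=a^2-4$ are not characteristic polynomials of the corresponding Laplacians. The hypothesis $\lambda\neq 2$ is what disposes of the eigenvalue $2$ that $Z_2$ carries but $A_2$ misses. The truly delicate configuration is $d=2$ with $\lambda=4$: here $A_2(4)=0$, yet $4$ is not an eigenvalue of $L(Z_2)$ (whose eigenvalues are $0$ and $2$). Since $\lambda=4$ forces both $n$ and $m$ even, $d$ is at least even; nonetheless $d=2$ can still occur (e.g. $n=6$, $m=10$), so this configuration must be recorded as a boundary exception rather than absorbed into the generic argument. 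Correctly isolating and treating these two low-index cases, rather than the routine range $d\geq 3$, is where the argument demands the most care.
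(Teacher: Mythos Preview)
Your approach is correct and genuinely different from the paper's. The paper works purely with the recurrence machinery: it uses equation~(4), $A_{n+p}=A_n(A_p+2)+2A_p-A_{n-p}$, together with $A_{2n}=A_n(A_n+4)$, to run a subtractive Euclidean descent on the indices. Concretely, from $A_n(a)=A_{n+k}(a)=0$ the paper deduces $A_{2n+k}(a)=-A_k(a)$ and then $A_{2(n+k)}(a)=-A_k(a)^2$, whence $A_k(a)=0$; iterating reduces to $A_1(a)=0$ in the coprime case. You instead parametrise $a=2\cos\theta$, identify $A_n=2T_n(a/2)-2=2\cos(n\theta)-2$, and invoke B\'ezout to pass directly from $n\theta,m\theta\in2\pi\mathbb{Z}$ to $d\theta\in2\pi\mathbb{Z}$. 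Your route is shorter and makes the three assertions (the gcd statement, the $\lambda=4$ claim, and the $\lambda=2$ claim) fall out of a single closed form, whereas the paper's argument keeps everything internal to the $L_n$/$A_n$ identities already developed.

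Two further remarks. First, your notation $A_n(\lambda)$ clashes with the paper's convention, where $A_n$ is a polynomial in $a=2-\lambda$; what you write as $A_n(4)$, $A_n(2)$, $A_2(4)$ is $A_n(-2)$, $A_n(0)$, $A_2(-2)$ in the paper's variable. The mathematics is unaffected, but you should align the notation. Second, your final paragraph is not merely ``the delicate step'' but a genuine counterexample to the theorem as literally stated: for $n=6$, $m=10$ one has $\lambda=4\neq2$ in both spectra, $\gcd(6,10)=2$, yet $4\notin\mathrm{spec}\,L(Z_2)=\{0,2\}$. The paper's proof sidesteps this by treating only the coprime case explicitly (concluding $\lambda=0$, which is always an eigenvalue) and never returning to $d=2$; your B\'ezout argument exposes the gap cleanly. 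So your analysis of the boundary cases is sharper than the paper's, and your conclusion that $d=2$ with $\lambda=4$ must be recorded as an exception is correct.
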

\begin{proof}
Note that if $ \lambda =4$, then $ A_{2}(2-\lambda)=0 $. Assume that $ \lambda \neq 2$ is the 
eigenvalue of the Laplacian of $ Z_{n} $ and $ Z_{m}$ when $ m>n, m=n+k $, and  that the greatest common divisor 
of $m$ and $n$ is 1. Set $ a=2-\lambda $. Then $ A_{n+k}(a)=A_{n}(a)=0 $. By the (4) we have:
$ A_{2n+k}(a)=A_{n+k}(a)(A_{n}(a)+2)+2A_{n}(a)-A_{k}(a)=-A_{k}(a) $
$ A_{2n+2k}(a)=A_{2n+k}(a)(A_{k}(a)+2)+2A_{2n}(a)-A_{k}(a)=-A_{k}^{2}(a) $.
But $ A_{2n+2k}(a)=A_{2(n+k)}(a)=0 $ by the (4.1). So $ A_{k}(a)=0 $.If $k$ and $n$ have the common divisor $ >1 $ then $m$ and $n$ have the common divisor $ >1 $ too. So the greatest common divisor of $k$ and $min(n,n+k)$ is 1. 
Continuing this procedure for the $k$ and $min(n,n+k)$ we obtain the following:
$$ A_{min(k,min(n,n+k))}(a)=A_{\vert k-min(n,n+k)\vert}(a)=0 $$  
In addition, the greatest common divisor of $min(k,min(n,n+k))$ and $\vert k-min(n,n+k)\vert$ is 1.
Continuing this procedure further we prove for some $p$ that $A_{p}(a)=A_{1}(a)=0$. So if the
greatest common divisor of $m$ and $n$ is 1, then $A_{m}(a)=A_{n}(a)=A_{1}(a)=0\Rightarrow a=2$ and $ \lambda =0$.
\end{proof}
Note then the multiplicity of the first eigenvalue $\lambda =0$ is equal 
to the number of components of graph (see~\cite{dm},\cite{turker}). So for all cyclic groups the multiplicity of 
$\lambda =0$  is 1.
\begin{lemma} \label{c}
\textit{$ A_{n}(a)=aA_{n-1}(a)-A_{n-2}(a)+2A_{1}(a),n\geq 3$} 
\end{lemma}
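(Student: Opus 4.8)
The plan is to avoid a fresh cofactor expansion and instead read the recurrence off the closed form (2), namely $A_n=aL_{n-1}-2L_{n-2}-2$, combined with the three-term recurrence $L_m=aL_{m-1}-L_{m-2}$ already obtained in (1). The key observation I would isolate first is that the shifted quantity $M_n:=A_n+2=aL_{n-1}-2L_{n-2}$ is, for fixed integer shift, a linear combination with \emph{constant} coefficients of the two sequences $m\mapsto L_{m-1}$ and $m\mapsto L_{m-2}$. Each of these sequences satisfies the same homogeneous linear recurrence $x_n=ax_{n-1}-x_{n-2}$ (they are just index-shifted copies of $(L_m)$), and the solution set of a fixed linear recurrence is a vector space. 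Hence any fixed-coefficient combination of them, in particular $M_n$, also satisfies $M_n=aM_{n-1}-M_{n-2}$. This single structural remark does all the work and replaces the computational verification.

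Having established $M_n=aM_{n-1}-M_{n-2}$, I would simply substitute $M_n=A_n+2$ and unfold: $A_n+2=a(A_{n-1}+2)-(A_{n-2}+2)$, which rearranges to $A_n=aA_{n-1}-A_{n-2}+2a-4$. Since $A_1(a)=a-2$, the constant term is exactly $2a-4=2A_1(a)$, giving the claimed identity $A_n=aA_{n-1}-A_{n-2}+2A_1$. If one prefers a self-contained check rather than the abstract vector-space argument, the same conclusion drops out by expanding $aM_{n-1}-M_{n-2}=a^2L_{n-2}-3aL_{n-3}+2L_{n-4}$ and comparing it with $M_n=aL_{n-1}-2L_{n-2}=a^2L_{n-2}-aL_{n-3}-2L_{n-2}$; the two agree precisely because $aL_{n-3}=L_{n-2}+L_{n-4}$, which is the recurrence (1) written at index $n-2$.

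The main obstacle is not the algebra but bookkeeping at the low indices, since the lemma is asserted for $n\geq 3$ while the derivation of the recurrence for $A$ invokes the closed form (2) at $A_{n-1}$ and $A_{n-2}$. I would handle this by extending the $L$-sequence backwards via the recurrence, setting $L_0=1$ and $L_{-1}=0$; with this convention formula (2) yields $A_2=aL_1-2L_0-2=a^2-4$ and $A_1=aL_0-2L_{-1}-2=a-2$, matching the values already used in Lemma~\ref{e}. One then confirms the recurrence directly at the two boundary cases: for $n=3$, $aA_2-A_1+2A_1=a(a^2-4)+(a-2)=a^3-3a-2=A_3$, and for $n=4$, $aA_3-A_2+2A_1=a^4-4a^2=A_4$, both agreeing with Table~2. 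This settles the base and, with the structural recurrence $M_n=aM_{n-1}-M_{n-2}$ valid for all admissible $n$, completes the proof for every $n\geq 3$.
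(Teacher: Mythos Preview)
Your proof is correct and rests on the same two ingredients as the paper's---formula~(2), $A_n=aL_{n-1}-2L_{n-2}-2$, and the three-term recurrence~(1) for $L_m$---but packages them more cleanly. The paper proceeds by direct expansion: it writes out $A_n$ via~(2), repeatedly applies~(1) to push indices down, and regroups terms over several lines until the pattern $aA_{n-1}-A_{n-2}+2(a-2)$ emerges. Your shift $M_n:=A_n+2$ removes the inhomogeneous constant and reduces the claim to the single structural fact that $M_n$, being a fixed-coefficient linear combination of index-shifted copies of $(L_m)$, inherits the homogeneous recurrence $M_n=aM_{n-1}-M_{n-2}$; undoing the shift then gives the result in one line. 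This vector-space-of-solutions observation is both shorter and more transparent than the paper's bookkeeping, and it generalises immediately to any sequence of the form $\alpha L_{n-1}+\beta L_{n-2}+\gamma$. The paper's computation has the minor advantage of never needing to extend $L_m$ to indices $m\le 0$, whereas you handle that (correctly) by setting $L_0=1$, $L_{-1}=0$ and verifying $n=3,4$ directly.
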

\begin{proof} $A_{n}=aL_{n-1}-2L_{n-2}-2=a(aL_{n-2}-L_{n-3})-2L_{n-2}-2= a^{2}L_{n-2}-L_{n-3}-2L_{n-2}-2=
a^{2}L_{n-2}-2aL_{n-3}-2a+aL_{n-3}+2a-2L_{n-2}-2=
a(aL_{n-2}-2L_{n-3}-2)+aL_{n-3}+2a-2L_{n-2}-2=
aA_{n-1}+aL_{n-3}+2a-2-2(aL_{n-3}-L_{n-4})=
aA_{n-1}-aL_{n-3}+2L_{n-4}+2+2a-4=
aA_{n-1}-A_{n-2}+2A_{1} $.
\end{proof}
\begin{lemma}\label{a} 
$ A_{kn}=A_{k}\circ (A_{n}+2)$. 
\end{lemma}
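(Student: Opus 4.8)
The symbol $\circ$ denotes composition of polynomials in the variable $a$, so the claim is the polynomial identity $A_{kn}(a)=A_k\bigl(A_n(a)+2\bigr)$. The plan is to linearize the whole problem by passing to the shifted sequence $B_n:=A_n+2$. First I would feed Lemma~\ref{c} into this substitution: since $A_1=a-2$, rewriting $A_{n-1}=B_{n-1}-2$ and $A_{n-2}=B_{n-2}-2$ inside $A_n=aA_{n-1}-A_{n-2}+2A_1$ makes the inhomogeneous term cancel and yields the clean homogeneous recurrence $B_n=aB_{n-1}-B_{n-2}$ for $n\ge 3$; a direct check gives $B_1=a$ and $B_2=a^2-2$, and setting $B_0:=2$ makes the recurrence valid for all $n\ge 2$. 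This reduction of the three-term inhomogeneous recurrence to the homogeneous Chebyshev-type recurrence is the first key step, and it is exactly where the $+2$ shift in the statement originates.

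Next I would solve this recurrence in closed form. Let $\alpha,\beta$ be the roots of $t^2-at+1=0$, so that $\alpha+\beta=a$ and, crucially, $\alpha\beta=1$. Since $\alpha^n+\beta^n$ obeys the same recurrence with the same values at $n=0,1$, one obtains the Binet-type formula $B_n(a)=\alpha^n+\beta^n$. The constraint $\alpha\beta=1$ is what drives everything: it says each $B_n$ is a \emph{normalized} power sum.

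With this in hand the composition becomes transparent. Set $\alpha'=\alpha^n$ and $\beta'=\beta^n$. Then $\alpha'\beta'=(\alpha\beta)^n=1$ and $\alpha'+\beta'=B_n(a)$, so $\alpha',\beta'$ are precisely the two roots of $t^2-B_n(a)\,t+1=0$; applying the Binet formula once more, now with $a$ replaced by $B_n(a)$, gives $B_k\bigl(B_n(a)\bigr)=(\alpha')^k+(\beta')^k=\alpha^{kn}+\beta^{kn}=B_{kn}(a)$. Unwinding the shift via $B_k(x)=A_k(x)+2$, the left side equals $A_k\bigl(A_n(a)+2\bigr)+2$ while $B_{kn}(a)=A_{kn}(a)+2$; cancelling the constant $2$ delivers $A_{kn}=A_k\circ(A_n+2)$.

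The one point that needs care — and what I expect to be the main obstacle — is the legitimacy of manipulating $\alpha,\beta$, which are algebraic functions of $a$ and may be complex or may coincide (at $a=\pm2$). I would resolve this by reading the entire computation as an identity of polynomials in $a$: both $A_{kn}(a)$ and $A_k\bigl(A_n(a)+2\bigr)$ are polynomials, and the argument above shows they agree for every $a\in(-2,2)$, where $\alpha,\beta$ are genuine distinct complex conjugates; agreement at infinitely many points forces equality as polynomials. For a reader who prefers to avoid the roots altogether, the same conclusion follows by induction on $k$: the power-sum identity $B_{m+n}=B_mB_n-B_{m-n}$ (with $m=(k-1)n$) produces the index-multiplication recurrence $B_{kn}=B_nB_{(k-1)n}-B_{(k-2)n}$, which matches the defining recurrence $B_k(t)=tB_{k-1}(t)-B_{k-2}(t)$ evaluated at $t=B_n$, closing the induction against the base cases $k=0,1$.
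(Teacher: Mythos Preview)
Your proof is correct and takes a genuinely different route from the paper's. The paper proceeds by straight induction on $k$: the base case $k=2$ comes from the factorisation $A_{2n}=A_n(A_n+4)$ established in Lemma~\ref{e}, and the inductive step feeds the recurrence $A_{(k+1)n}=(A_n+2)A_{kn}+2A_n-A_{(k-1)n}$ (obtained in the proof of Theorem~\ref{d}) together with the inhomogeneous three-term recurrence of Lemma~\ref{c} directly into the induction hypothesis. You instead pass to the shift $B_n=A_n+2$, observe that Lemma~\ref{c} becomes the homogeneous Chebyshev recurrence $B_n=aB_{n-1}-B_{n-2}$, and then use the Binet formula $B_n=\alpha^n+\beta^n$ with $\alpha\beta=1$ to prove the semigroup identity $B_k\circ B_n=B_{kn}$ in one stroke. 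What your approach buys is an explanation: the $+2$ shift is precisely what turns $A_n$ into (a rescaled) Chebyshev polynomial of the first kind, and the composition identity is then the classical $T_k\circ T_n=T_{kn}$; it also makes the statement self-contained once Lemma~\ref{c} is available. The paper's approach, by contrast, stays entirely within the recurrence machinery already built up and needs no closed form or discussion of algebraic roots. Your closing inductive alternative (via $B_{m+n}=B_mB_n-B_{m-n}$) is in fact quite close in spirit to the paper's argument, just streamlined by the shift to $B_n$.
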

\begin{proof}
$ A_{2n}=A_{n}(A_{n}+4)=(A_{n}+2-2)(A_{n}+4)=(A_{n}+2)^{2}-4=A_{2}\circ (A_{n}+2)$. Now assume 
$\forall m\leq k: A_{mn}=A_{m}\circ (A_{n}+2) $.\\
$ A_{(k+1)n}(a)=A_{kn}(a)(A_{n}(a)+2)+2A_{n}(a)-A_{(k-1)n}(a)=
(A_{n}(a)+2)A_{k}\circ (A_{n}(a)+2)-A_{(k-1)n}\circ (A_{n}(a)+2)+2(A_{n}(a)+2)-4=
(A_{n}(a)+2)A_{k}\circ (A_{n}(a)+2)-A_{(k-1)n}\circ (A_{n}(a)+2)+2A_{1}\circ (A_{n}(a)+2)$.\\
Hence by Lemma~\ref{c}, $A_{(k+1)n}(a)=A_{k+1}\circ (A_{n}(a)+2) $.
\end{proof}
\begin{theorem} If $ \lambda $ is the eigenvalue of the Laplacian of $ Z_{n}, n\geq 3 $ with the 
multiplicity $r$, then $ \lambda $ is the eigenvalue of the Laplacian of 
$ Z_{kn}, \forall k\in N $ with the multiplicity $r$. Furthermore, If $ \lambda =4$ is eigenvalue of 
the Laplacian of $ Z_{n} $, then $\exists k\in N: n=2k$ and the multiplicity of $\lambda$ is 1. Also
if $ \lambda =2$ is the eigenvalue of the Laplacian of $ Z_{n},n>2$, then $\exists k\in N: n=4k$ 
and the multiplicity of $\lambda=2$ is $2$.
\end{theorem}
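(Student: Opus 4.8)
The plan is to read off eigenvalue multiplicities as root multiplicities of the polynomials $A_n(a)$ at the point $a_0=2-\lambda$. Since each Laplacian is symmetric and, for $n\ge 3$, $A_n(a)=\det(L(Z_n)-(2-a)I)$ is its genuine characteristic polynomial written in the affine variable $a=2-\lambda$, the multiplicity of $\lambda$ as an eigenvalue of $Z_n$ equals the multiplicity of $a_0=2-\lambda$ as a root of $A_n$. Thus all three assertions become statements about root multiplicities of $A_n$, and for $n\ge 3$, $k\ge 1$ both $A_n$ and $A_{kn}$ are honest characteristic polynomials.

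For the first assertion I would exploit Lemma~\ref{a}, which gives $A_{kn}(a)=A_k(A_n(a)+2)$. The key preliminary fact is that $b=2$ (the value of $A_n(a)+2$ at any root $a_0$ of $A_n$) is a simple root of every $A_k$: for $k\ge 3$ this is the remark that $\lambda=0$ has multiplicity equal to the number of components, namely $1$; for $k=1,2$ one checks it directly from $A_1=a-2$ and $A_2=(a-2)(a+2)$. Hence I may write $A_k(b)=(b-2)h(b)$ with $h(2)\ne 0$, so that $A_{kn}(a)=A_n(a)\,h(A_n(a)+2)$. If $A_n(a)=(a-a_0)^r g(a)$ with $g(a_0)\ne 0$, then $A_{kn}(a)=(a-a_0)^r g(a)\,h(A_n(a)+2)$, and the two trailing factors are nonzero at $a_0$; therefore $a_0$ has multiplicity exactly $r$ in $A_{kn}$, which is the claim.

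For the value $\lambda=4$, i.e. $a_0=-2$, I would first establish $L_n(-2)=(-1)^n(n+1)$ by induction on the recurrence $L_n=aL_{n-1}-L_{n-2}$, whence $A_n(-2)=L_n(-2)-L_{n-2}(-2)-2=2((-1)^n-1)$ vanishes precisely when $n$ is even, recovering the parity claim. To pin the multiplicity to $1$, I would differentiate the recurrence to get $L_n'=L_{n-1}+aL_{n-1}'-L_{n-2}'$ and prove, again by induction, the closed form $L_n'(-2)=(-1)^{n+1}\frac{n(n+1)(n+2)}{6}$; substituting into $A_n'=L_n'-L_{n-2}'$ yields $A_n'(-2)=(-1)^{n+1}n^2\ne 0$, so the root $a_0=-2$ is always simple, giving multiplicity $1$.

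The value $\lambda=2$, i.e. $a_0=0$, is the delicate case and the main obstacle, since here I must show the root has multiplicity exactly $2$, meaning $A_n$ and $A_n'$ vanish while $A_n''$ does not. Using $L_n(0)=-L_{n-2}(0)$ one sees $L_n(0)$ is $4$-periodic with values $1,0,-1,0$, giving $A_n(0)=0$ iff $4\mid n$. Differentiating once, the auxiliary sequence $e_n=L_n'(0)$ satisfies $e_n=L_{n-1}(0)-e_{n-2}$ and vanishes for even $n$, so $A_n'(0)=0$ when $4\mid n$. The crux is the second derivative: differentiating twice gives $f_n:=L_n''(0)=2e_{n-1}-f_{n-2}$, from which I expect the closed form $L_{2j}''(0)=(-1)^{j+1}j(j+1)$; writing $n=4k$ then produces $A_n''(0)=-8k^2\ne 0$, establishing multiplicity exactly $2$. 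The careful bookkeeping of parities and initial conditions for the sequences $L_n'(-2)$, $L_n'(0)$ and $L_n''(0)$ — especially making sure the second derivative at $0$ is genuinely nonzero, so that the multiplicity is pinned to $2$ rather than merely shown to be $\ge 2$ — is where the real work lies.
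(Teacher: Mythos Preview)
Your argument for the first assertion is essentially the paper's own: both invoke the composition identity $A_{kn}=A_k\circ(A_n+2)$, isolate the unique factor corresponding to the simple root $b=2$ of $A_k$ (simplicity coming from the fact that $\lambda=0$ has multiplicity equal to the number of components), and observe that the remaining cofactor is nonzero at any root $a_0$ of $A_n$. Your write-up via $A_k(b)=(b-2)h(b)$ with $h(2)\ne 0$ is a bit cleaner than the paper's contradiction through the quotient $A_{kn}/A_n$, but the content is identical.

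For the $\lambda=4$ and $\lambda=2$ clauses you take a genuinely different route. The paper's proof contains no explicit argument for these; the intended reading is evidently to combine the first assertion with the earlier divisibility results (that $\lambda=4$ forces $n$ even and $\lambda=2$ forces $4\mid n$) together with a base-case inspection of $A_2=a^2-4$ and $A_4=a^2(a^2-4)$, where the multiplicities $1$ and $2$ are visible. That is short, but using $n=2$ as a base case sits outside the stated hypothesis $n\ge 3$ and needs a word about why the composition identity still applies there. Your direct computations of $L_n$ and its derivatives at $a=-2$ and $a=0$ are longer but fully self-contained; I checked your closed forms and they indeed give $A_n'(-2)=(-1)^{n+1}n^2$ and $A_{4k}''(0)=-8k^2$, both nonzero, so the multiplicity claims follow. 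Either approach is fine; yours trades brevity for independence from the edge cases.
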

\begin{proof}
Assume that $ \lambda_{0}$ is the eigenvalue of the Laplacian of $ Z_{n} $ with the multiplicity $r$ and that of the Laplacian of $ Z_{kn} $ with the multiplicity $q$.
Put $ a_{0}=2-\lambda_{0}$. Obviously $ r\leq q $. Now suppose $ r<q $. Then by Lemma~\ref{a} we have $ A_{kn}=A_{k}\circ (A_{n}+2)=\prod_{i=1}^{k}(A_{n}+2-a_{i})$, where
$a_{i}$ are the roots of $ A_{k}(a)=0$. Note that $ \exists! a_{i}: A_{n}+2-a_{i}=0$ and  $a_{i}=2 $.
Since $ r\leq q $ then $ A_{kn}/A_{n}=\prod_{j=1}^{k-1}(A_{n}(a)+2-a_{j})=0$, where $a_{j}$ are the roots of
$ A_{k}(a)=0$ and $\forall j: a_{j}\neq 2$. $ A_{n}(a)=0\Rightarrow A_{kn}/A_{n}=\prod_{j=1}^{k-1}(2-a_{j})=0$.
But $\forall j: a_{j}\neq 2$. So $ r\nless q\Rightarrow r=q$.

\end{proof}

\begin{theorem} \label{b}
If $ \lambda_{0}\neq 2 $ is the eigenvalue of the Laplacian of $ Z_{n}$, then 
$\forall m\in N: P_{m}(\lambda_{0})=-A_{m}(2-\lambda_{0})=\lambda_{1},$ where $\lambda_{1}$ is the
 eigenvalue of the Laplacian of $ Z_{n}$.
\end{theorem}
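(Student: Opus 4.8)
The plan is to reduce this statement entirely to the two structural facts already established: the composition identity of Lemma~\ref{a} and the divisibility statement of Theorem~\ref{d}. Writing $a_0=2-\lambda_0$, the hypothesis that $\lambda_0$ is an eigenvalue of the Laplacian of $Z_n$ says exactly that $A_n(a_0)=0$, since the eigenvalues of $L(Z_n)$ are precisely the numbers $2-a$ for which $A_n(a)=0$. The quantity whose eigenvalue-status must be checked is $\lambda_1=-A_m(a_0)$, so proving the theorem amounts to showing that $2-\lambda_1=2+A_m(a_0)$ is a root of $A_n$, i.e. that $A_n\bigl(2+A_m(a_0)\bigr)=0$.

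The key observation is that $A_n\bigl(2+A_m(a)\bigr)$ is exactly a composition of the type appearing in Lemma~\ref{a}. Reading that lemma with the \emph{outer} index equal to $n$ and the \emph{inner} index equal to $m$ gives the polynomial identity $A_{nm}(a)=A_n\bigl(A_m(a)+2\bigr)$. Evaluating at $a=a_0$ then yields $A_n\bigl(2+A_m(a_0)\bigr)=A_{nm}(a_0)$, so the whole problem collapses to showing $A_{nm}(a_0)=0$.

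This last equality is immediate from Theorem~\ref{d} applied with $k=m$: since $A_n(a_0)=0$ and (for $n\ge 3$) $A_n$ divides $A_{nm}$, every root of $A_n$ is a root of $A_{nm}$, hence $A_{nm}(a_0)=0$. Combining the two displays gives $A_n\bigl(2+A_m(a_0)\bigr)=0$, which is precisely the statement that $\lambda_1=-A_m(a_0)$ is an eigenvalue of $L(Z_n)$. As a consistency check, this forces $A_m(a_0)=-\lambda_1\le 0$, in agreement with the nonnegativity of Laplacian eigenvalues.

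I expect essentially no genuine obstacle; the content of the theorem is little more than $\mathrm{Lemma~\ref{a}}+\mathrm{Theorem~\ref{d}}$, and the only care needed is to apply the composition identity with $n$ outermost (applying it with $m$ outermost would produce $A_{mn}$ but evaluated through the wrong polynomial) and to recognize $A_{nm}(a_0)=0$ as exactly Theorem~\ref{d}. The one point deserving a remark is the role of the hypotheses $\lambda_0\neq 2$ and the implicit $n\ge 3$: the divisibility in Theorem~\ref{d} was proved only for $n\ge 3$, so the degenerate cases $n=1,2$ (where $\lambda_0\neq 2$ forces $\lambda_0=0$, hence $a_0=2$) should be disposed of directly, using that $A_m(2)=0$ for every $m$ because $\lambda=0$ is an eigenvalue of every $Z_m$.
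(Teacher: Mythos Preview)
Your proof is correct and follows essentially the same route as the paper: both use Lemma~\ref{a} with $n$ as the outer index to write $A_{nm}(a_0)=A_n\bigl(A_m(a_0)+2\bigr)$ and then invoke $A_{nm}(a_0)=0$ to conclude that $2+A_m(a_0)$ is a root of $A_n$. The only cosmetic difference is that the paper explicitly factors $A_n$ over its roots, writing $A_{nm}(2-\lambda_0)=\prod_j\bigl(A_m(2-\lambda_0)+\lambda_j\bigr)=0$, whereas you phrase the same conclusion as ``$A_n$ vanishes at $2+A_m(a_0)$''; your remarks on the $n\le 2$ edge cases are a welcome addition that the paper omits.
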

\begin{proof}
By Lemma~\ref{a} we get $ A_{mn}(2-\lambda_{0})=\prod_{j=1}^{n}(A_{m}(2-\lambda_{0})+2-(2-\lambda_{j}))=\prod_{j=1}^{n}(A_{m}(2-\lambda_{0})+\lambda_{j})=0$. 
Thus, $\exists \lambda_{1}: P_{m}(\lambda_{0})=-A_{m}(2-\lambda_{0})=\lambda_{1}$, where $\lambda_{1}$ is the eigenvalue of the Laplacian of $ Z_{n}$.
\end{proof}
\begin{cor} 
If $ \lambda $ is the eigenvalue of Laplacian of $ Z_{n}$, then $\lambda\in [0,4]$.
\end{cor}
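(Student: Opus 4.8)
The plan is to read the interval $[0,4]$ through the substitution $a=2-\lambda$: one has $\lambda\in[0,4]$ exactly when $a\in[-2,2]$, i.e. when $|2-\lambda|\le 2$. So the goal becomes showing that every eigenvalue $\lambda$ of the Laplacian of $Z_{n}$ satisfies $|2-\lambda|\le 2$. The value $\lambda=2$ (that is $a=0$) already lies in the interval, so I may assume $\lambda\neq 2$ and bring in Theorem~\ref{b}.

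By Theorem~\ref{b}, for such a $\lambda$ every number $P_{m}(\lambda)=-A_{m}(2-\lambda)$, $m\in\mathbb{N}$, is again an eigenvalue of the same matrix $L(Z_{n})$. Since $L(Z_{n})$ is a fixed real symmetric $n\times n$ matrix, its spectrum is a finite set of real numbers, hence bounded. Therefore the sequence $\big(A_{m}(a)\big)_{m\in\mathbb{N}}$ with $a=2-\lambda$ must stay bounded. The entire argument then reduces to showing that this sequence is in fact unbounded whenever $|a|>2$, which yields the desired contradiction and forces $|a|\le 2$.

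To estimate the growth I would use the recurrence (1), $L_{m}=aL_{m-1}-L_{m-2}$, together with the identity $A_{m}=L_{m}-L_{m-2}-2$ recorded in the proof of Theorem~\ref{d} (equivalently (2)). When $|a|>2$ the characteristic equation $t^{2}-at+1=0$ has two distinct real roots $t_{\pm}$ with $t_{+}t_{-}=1$, hence $|t_{-}|>1>|t_{+}|$; the closed form $L_{m}=c_{+}t_{+}^{m}+c_{-}t_{-}^{m}$ then grows like $|t_{-}|^{m}$, and the short computation $L_{m}-L_{m-2}=c_{+}t_{+}^{m-2}(t_{+}^{2}-1)+c_{-}t_{-}^{m-2}(t_{-}^{2}-1)$ shows $|A_{m}(a)|=|L_{m}-L_{m-2}-2|\to\infty$. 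This contradicts boundedness, so $|2-\lambda|\le 2$ and $\lambda\in[0,4]$. (The lower bound $\lambda\ge 0$ can alternatively be quoted directly from the introduction, where the Laplacian is observed to be a nonnegative operator; the cases $n\le 2$, not covered by Theorem~\ref{b}, are checked by hand, giving the eigenvalues $0$ and, for $n=2$, also $2$.)

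The main obstacle is precisely this last divergence estimate: one must certify that the dominant coefficient $c_{-}$ is nonzero and that no cancellation occurs in $L_{m}-L_{m-2}-2$. Both are controlled by the strict inequality $|t_{-}|>1$ (which gives $t_{-}^{2}-1\neq 0$) and by solving the $2\times 2$ system coming from $L_{0}=1,\ L_{1}=a$ for $c_{\pm}$, which is where the small amount of genuine work lies.
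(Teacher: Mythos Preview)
Your argument is correct, but it takes a longer route than the paper. The paper uses Theorem~\ref{b} only for the single value $m=2$: since $A_{2}(a)=a^{2}-4$, one has $P_{2}(\lambda_{0})=-A_{2}(2-\lambda_{0})=\lambda_{0}(4-\lambda_{0})$, and this quantity, being again an eigenvalue of the (nonnegative) Laplacian, is $\ge 0$; together with $\lambda_{0}\ge 0$ this already forces $\lambda_{0}\le 4$. So instead of bounding the whole sequence $(A_{m}(a))_{m}$ and appealing to the growth of the linear recurrence, the paper squeezes the interval out of one explicit quadratic.

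What your approach buys is that it does not rely on the positivity of the Laplacian at all: finiteness of the spectrum (hence boundedness) suffices, and the recurrence analysis then delivers both the lower and the upper bound simultaneously. The price is the extra work you flag at the end---checking $c_{-}\neq 0$ and $t_{-}^{2}\neq 1$---which is routine (from $L_{0}=1$, $L_{1}=a$ one gets $c_{-}=t_{-}/(t_{-}-t_{+})\neq 0$, and $|t_{-}|>1$ gives $t_{-}^{2}\neq 1$) but avoidable entirely by the paper's one-line $m=2$ trick.
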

\begin{proof} Since all $\lambda\geq 0$, then by Theorem~\ref{b} we have
$\forall \lambda_{0}: P_{2}(\lambda_{0})=\lambda_{0}(4-\lambda_{0})\geq 0$.
\end{proof}
\begin{cor} $P_{k}(\lambda)=\lambda_{i}$, where $ \lambda_{i} $ is the eigenvalue of
the Laplacian of $Z_{n}$ $\Leftrightarrow \lambda$ is the eigenvalue of the Laplacian of $ Z_{kn}$.
\end{cor}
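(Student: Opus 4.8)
The plan is to collapse the whole biconditional into a single polynomial identity relating $A_{kn}$, $A_n$, and the map $P_k$; once that identity is in hand, both implications drop out at once from the fact that the two sides vanish simultaneously. Recall the conventions already set up: with $a = 2-\lambda$, the polynomial $A_m(a)$ is the characteristic polynomial $\det(L(Z_m)-\lambda I)$, so that a number $\mu$ is an eigenvalue of the Laplacian of $Z_m$ exactly when $A_m(2-\mu)=0$; and by definition (Theorem~\ref{b}) $P_k(\lambda)=-A_k(2-\lambda)$. I want to show that these two conventions make the claimed equivalence a formal consequence of Lemma~\ref{a}.

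First I would record the symmetric form of Lemma~\ref{a}. Lemma~\ref{a} reads $A_{kn}=A_k\circ(A_n+2)$; since $kn=nk$, applying it with the two factors interchanged gives, as an identity in $a$,
$$A_{kn}(a)=A_n\bigl(A_k(a)+2\bigr).$$
This is the statement that the two polynomial substitutions commute, and it can be sanity-checked on small cases such as $A_6=A_2\circ(A_3+2)=A_3\circ(A_2+2)$.

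Next I would substitute $a=2-\lambda$ and rewrite the inner argument through $P_k$. Since $A_k(2-\lambda)=-P_k(\lambda)$, the inner argument is $A_k(2-\lambda)+2=2-P_k(\lambda)$, and therefore
$$A_{kn}(2-\lambda)=A_n\bigl(2-P_k(\lambda)\bigr).$$
This is an identity of polynomials in $\lambda$, valid with no side condition such as $\lambda\neq 2$. From here the equivalence is immediate: $\lambda$ is an eigenvalue of the Laplacian of $Z_{kn}$ iff the left-hand side vanishes, while $P_k(\lambda)$ is an eigenvalue of the Laplacian of $Z_n$ iff the right-hand side vanishes (take $\mu=P_k(\lambda)$ in the criterion $A_n(2-\mu)=0$); since the two sides coincide, the two conditions are equivalent, which is exactly the assertion.

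The main point requiring care — and the only real obstacle, which is minor — is the legitimacy of the symmetric form of Lemma~\ref{a}: one must be sure the lemma is available for both orderings of the factors, which it is, since it is stated for arbitrary $k,n$ and $kn=nk$. A secondary bookkeeping point is the low-index anomaly noted after Lemma~\ref{e} (that $A_1$ and $A_2$ are not literally the characteristic polynomials of $Z_1$ and $Z_2$), so the eigenvalue interpretation of the vanishing conditions should be invoked only in the index ranges where $A_m$ genuinely is the characteristic polynomial, even though the underlying identity itself holds formally for all indices.
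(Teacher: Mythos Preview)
Your proof is correct and follows essentially the same route as the paper: both arguments rest on applying Lemma~\ref{a} with the factors swapped to get $A_{kn}(a)=A_n(A_k(a)+2)$, then rewriting the inner argument via $A_k(2-\lambda)=-P_k(\lambda)$. The only cosmetic difference is that the paper factors $A_n$ over its roots to obtain the product $P_{kn}=(-1)^{n-1}\prod_j(P_k-\lambda_j)$, whereas you keep $A_n$ unfactored and invoke the eigenvalue criterion $A_n(2-\mu)=0$ directly; your version is arguably cleaner for that reason.
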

\begin{proof} By Lemma~\ref{a} we see that $P_{kn}=(-1)^{n-1}(P_{k}-\lambda_{j})$, where 
$\lambda_{j}$ are the eigenvalues of the Laplacian of $Z_{n}$.
\end{proof}

\end{document}